\newcommand*\circled[1]{\tikz[baseline=(char.base)]{
            \node[shape=circle,draw,inner sep=2pt] (char) {#1};}}
\newtheorem{theorem}{Theorem}[section]
\newtheorem{example}[theorem]{Example}
\theoremstyle{definition}
\theoremstyle{remark}
\newtheorem{remark}{Remark}
\author{Heidi Goodson}
\address{Department of Mathematics, Brooklyn College; 2900 Bedford Avenue, Brooklyn, NY 11210 USA}
\email{heidi.goodson@brooklyn.cuny.edu}
\title[An Identity for Vertically Aligned Entries in Pascal's Triangle]{An Identity for Vertically Aligned Entries in Pascal's Triangle}
\begin{document}

\begin{abstract}
The classic way to write down Pascal's triangle leads to entries in alternating rows being vertically aligned. In this paper, we prove a linear dependence on vertically aligned entries in Pascal's triangle. Furthermore, we give an application of this dependence to morphisms between hyperelliptic curves.
\end{abstract}

\maketitle

\section{Introduction}

We consider entries in the $n$th row of Pascal's triangle, where $n$ is any nonnegative integer. It is well known that the $i$th entry in this row can be computed as $\binom{n}{i}$, where $0\leq i\leq n$. For example, the 3rd entry in row 11 is $\binom{11}{3}=\frac{11\cdot10\cdot9}{3!}=165$. Figure \ref{fig:pascal} shows rows 0 through 12 of Pascal's triangle.

\begin{center}
\begin{figure}[h]
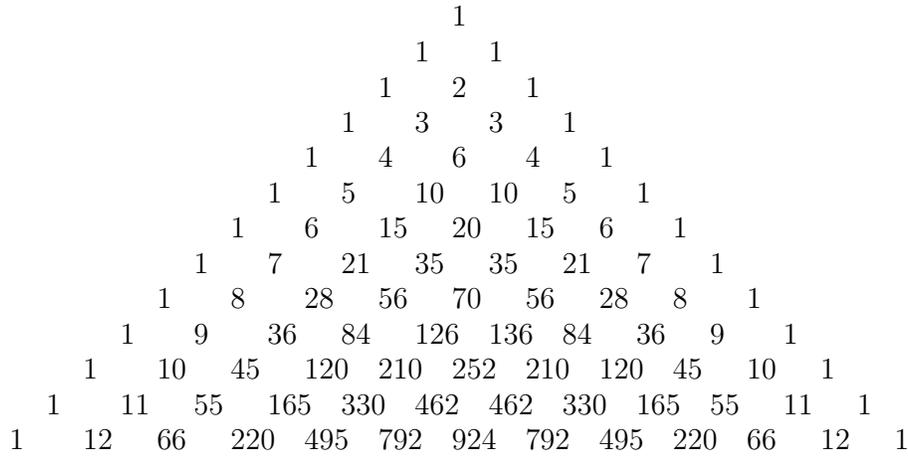

\resizebox{.75\textwidth}{!}{
\begin{tabular}{p{.5cm} p{.5cm} p{.5cm} p{.5cm} p{.5cm} p{.5cm} p{.5cm} p{.5cm} p{.5cm} p{.5cm} p{.5cm} p{.5cm} p{.5cm} p{.5cm} p{.5cm} p{.5cm} p{.5cm} p{.5cm} p{.5cm} p{.5cm} p{.5cm} p{.5cm} p{.5cm} p{.5cm} p{.5cm} }
&&&&&&&&&&&&1\\
&&&&&&&&&&&1&&1\\
&&&&&&&&&&1&&2&&1\\
&&&&&&&&&1&&3&&3&&1\\
&&&&&&&&1&&4&&6&&4&&1\\
&&&&&&&1&&5&&10&&10&&5&&1\\
&&&&&&1&&6&&15&&20&&15&&6&&1\\
&&&&&1&&7&&21&&35&&35&&21&&7&&1\\
&&&&1&&8&&28&&56&&70&&56&&28&&8&&1\\
&&&1&&9&&36&&84&&126&&136&&84&&36&&9&&1\\
&&1&&10&&45&&120&&210&&252&&210&&120&&45&&10&&1\\
&1&&11&&55&&165&&330&&462&&462&&330&&165&&55&&11&&1\\
1&&12&&66&&220&&495&&792&&924&&792&&495&&220&&66&&12&&1\\
\end{tabular}}
\vspace{.1in}
\caption[Table caption text]{Pascal's triangle.}
\label{fig:pascal}
\end{figure}    
\end{center}

Notice that entries in alternating rows are vertically aligned. For example, in Figure \ref{fig:pascal11_3} below we have circled the entries that are vertically aligned with the 3rd entry in the 11th row.  In Figure \ref{fig:pascal12_6} we have circled the entries that are vertically aligned with the 6th entry in the 12th row.
\newpage

\begin{center}
\begin{figure}[h]
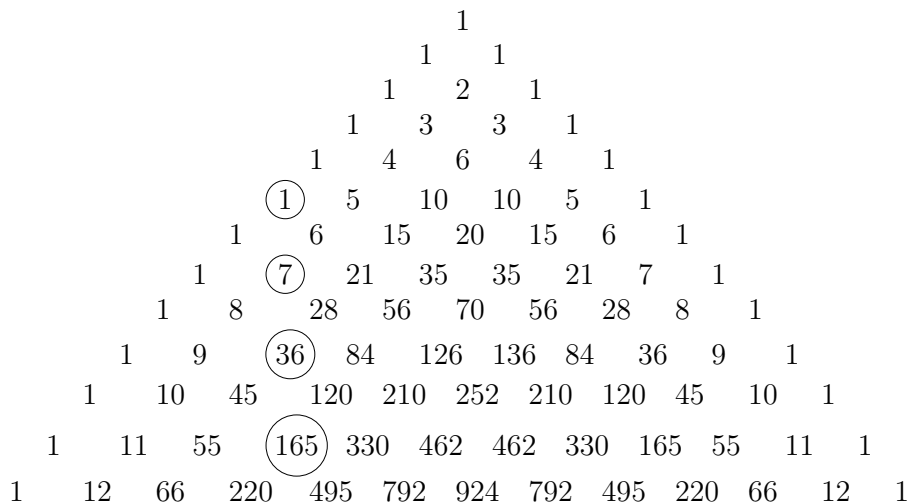

\resizebox{.75\textwidth}{!}{
\begin{tabular}{p{.5cm} p{.5cm} p{.5cm} p{.5cm} p{.5cm} p{.5cm} p{.5cm} p{.6cm} p{.5cm} p{.5cm} p{.5cm} p{.5cm} p{.5cm} p{.5cm} p{.5cm} p{.5cm} p{.5cm} p{.5cm} p{.5cm} p{.5cm} p{.5cm} p{.5cm} p{.5cm} p{.5cm} p{.5cm} }
&&&&&&&&&&&&1\\
&&&&&&&&&&&1&&1\\
&&&&&&&&&&1&&2&&1\\
&&&&&&&&&1&&3&&3&&1\\
&&&&&&&&1&&4&&6&&4&&1\\
&&&&&&&\circled{1}&&5&&10&&10&&5&&1\\
&&&&&&1&&6&&15&&20&&15&&6&&1\\
&&&&&1&&\circled{7}&&21&&35&&35&&21&&7&&1\\
&&&&1&&8&&28&&56&&70&&56&&28&&8&&1\\
&&&1&&9&&\circled{36}&&84&&126&&136&&84&&36&&9&&1\\
&&1&&10&&45&&120&&210&&252&&210&&120&&45&&10&&1\\
&1&&11&&55&&\circled{165}&&330&&462&&462&&330&&165&&55&&11&&1\\
1&&12&&66&&220&&495&&792&&924&&792&&495&&220&&66&&12&&1\\
\end{tabular}}
\vspace{.1in}
\caption{Entries vertically aligned with the 3rd entry in the 11th row.}
\label{fig:pascal11_3}
\end{figure}    
\end{center}

\begin{center}
\begin{figure}[h]
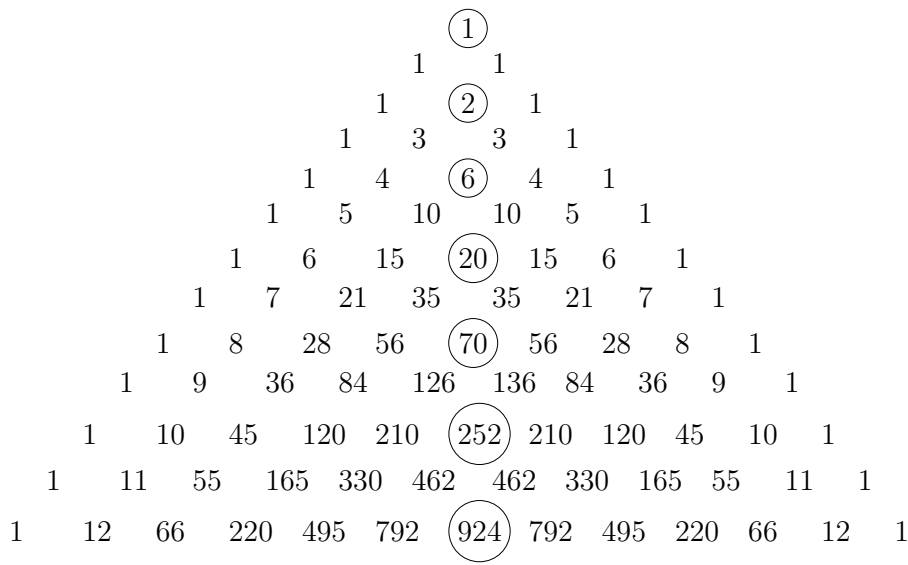

\resizebox{.75\textwidth}{!}{
\begin{tabular}{p{.5cm} p{.5cm} p{.5cm} p{.5cm} p{.5cm} p{.5cm} p{.5cm} p{.5cm} p{.5cm} p{.5cm} p{.5cm} p{.5cm} p{.6cm} p{.5cm} p{.5cm} p{.5cm} p{.5cm} p{.5cm} p{.5cm} p{.5cm} p{.5cm} p{.5cm} p{.5cm} p{.5cm} p{.5cm} }
&&&&&&&&&&&&\circled{1}&&&&&&&&&&&&\\
&&&&&&&&&&&1&&1\\
&&&&&&&&&&1&&\circled{2}&&1\\
&&&&&&&&&1&&3&&3&&1\\
&&&&&&&&1&&4&&\circled{6}&&4&&1\\
&&&&&&&{1}&&5&&10&&10&&5&&1\\
&&&&&&1&&6&&15&&\circled{20}&&15&&6&&1\\
&&&&&1&&{7}&&21&&35&&35&&21&&7&&1\\
&&&&1&&8&&28&&56&&\circled{70}&&56&&28&&8&&1\\
&&&1&&9&&{36}&&84&&126&&136&&84&&36&&9&&1\\
&&1&&10&&45&&120&&210&&\circled{252}&&210&&120&&45&&10&&1\\
&1&&11&&55&&{165}&&330&&462&&462&&330&&165&&55&&11&&1\\
1&&12&&66&&220&&495&&792&&\circled{924}&&792&&495&&220&&66&&12&&1\\
\end{tabular}}
\vspace{.1in}
\caption{Entries vertically aligned with the 6th entry in the 12th row.}
\label{fig:pascal12_6}
\end{figure}    
\end{center}

We can describe these entries in the following way. Starting with the $i$th entry in the $n$th row, i.e. $\binom{n}{i}$, the entries that are vertically aligned with this entry and above it are all of the form $$\binom{n-2k}{i-k}, $$
where $1\leq k\leq i$ and $k\leq \lfloor\frac{n}{2}\rfloor$.

For example, when $n=11$ and $i=3$, the entries that are above $\binom{11}{3}$ and vertically aligned with it  are
$$\binom92, \;\binom71,\; \binom50. $$
Observe that

$$\binom{11}{3}-11\binom92 +44\binom71-77\binom50 = 165-11\cdot36+44\cdot7-77\cdot1=0.$$

When $n=12$ and $i=6$, we have

\begin{align*}
\binom{12}{6} - 12\binom{10}{5} + 54\binom84 &- 112\binom63 + 105\binom42 - 36\binom21 + 2\binom00\\&=924 -12\cdot252+54\cdot70-112\cdot20+105\cdot6-36\cdot 2+2\cdot1\\&= 0.     
\end{align*}

In the next section, we prove a general formula for the linear dependence on vertically aligned entries in Pascal's triangle.

\section{General Formula}

\begin{theorem}\label{thm:id}
Let $n$ be a nonnegative integer and $0<i<n$. Then 

$$\sum_{k=0}^i (-1)^k\frac{n}{n-k}\binom{n-k}{k}\binom{n-2k}{i-k}=0. $$
\end{theorem}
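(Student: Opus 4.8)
The plan is to read the left-hand side as a single coefficient extraction and then identify the resulting generating function in closed form. Since $\binom{n-2k}{i-k}$ is the coefficient of $x^{i}$ in $x^{k}(1+x)^{n-2k}$, the sum in question equals the coefficient of $x^i$ in
$$F(x) := \sum_{k\ge 0}(-1)^k\frac{n}{n-k}\binom{n-k}{k}\,x^{k}(1+x)^{n-2k}.$$
The terms with $k>i$ begin at $x^{k}$ and so contribute nothing to the coefficient of $x^i$, which lets me extend the summation range freely; taking $0\le k\le\lfloor n/2\rfloor$ is the natural choice, and on this range $n-2k\ge 0$ so every binomial coefficient is genuine. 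It therefore suffices to show that the coefficient of $x^i$ in $F(x)$ vanishes for every $0<i<n$, and I will do this by computing $F(x)$ outright.

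Next I would factor the common power of $(1+x)$ out of each term. Writing $x^{k}(1+x)^{n-2k}=(1+x)^{n}\bigl(x/(1+x)^2\bigr)^{k}$ and setting $t=x/(1+x)^2$ gives
$$F(x)=(1+x)^{n}\sum_{k=0}^{\lfloor n/2\rfloor}(-1)^k\frac{n}{n-k}\binom{n-k}{k}\,t^{k}.$$
The inner sum is exactly the Girard--Waring expression for a power sum: if $\alpha,\beta$ are the roots of $z^2-z+t=0$ (so $\alpha+\beta=1$ and $\alpha\beta=t$), then $\sum_{k}(-1)^k\frac{n}{n-k}\binom{n-k}{k}t^{k}=\alpha^{n}+\beta^{n}$. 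Establishing this identity --- either by citing the classical Newton/Girard--Waring formula expressing the power sum $p_n$ in terms of the elementary symmetric functions, or by checking that both sides satisfy the same two-term recurrence $p_n=p_{n-1}-t\,p_{n-2}$ in $n$ (with matching initial values) --- is the step I expect to be the main obstacle, since everything afterward is a direct substitution.

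Finally I would substitute $t=x/(1+x)^2$ and solve for the roots. A short computation gives $1-4t=(1-x)^2/(1+x)^2$, hence $\sqrt{1-4t}=(1-x)/(1+x)$ near $x=0$, and the two roots are $\alpha=1/(1+x)$ and $\beta=x/(1+x)$. Therefore $\alpha^{n}+\beta^{n}=(1+x^{n})/(1+x)^{n}$, and the factor of $(1+x)^n$ cancels to leave $F(x)=1+x^{n}$. The coefficient of $x^i$ in $1+x^n$ is $0$ for every $i$ with $0<i<n$, which is precisely the claimed identity. As a sanity check, $F(x)=1+x^n$ also reproduces the boundary values $\binom{n}{0}=\binom{n}{n}=1$ at $i=0$ and $i=n$.
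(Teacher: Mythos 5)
Your proof is correct, and at bottom it is the same argument as the paper's: the Girard--Waring power-sum formula you invoke is precisely the Lockwood identity the paper starts from, and your closed form $F(x)=1+x^n$ is exactly that identity specialized at $y=1$ (your root computation $\alpha=1/(1+x)$, $\beta=x/(1+x)$ simply re-derives the specialization), so both proofs finish by reading off the coefficient of $x^i$. The one detail to add: when $i>\lfloor n/2\rfloor$, identifying the theorem's sum (which runs up to $k=i$) with $[x^i]F(x)$ (which runs only up to $k=\lfloor n/2\rfloor$) requires noting that the discarded terms vanish because $\binom{n-k}{k}=0$ whenever $n/2<k<n$ --- the same point the paper disposes of in its Remark.
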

\begin{remark}
Note that the $k=0$ term is simply $\binom{n}{i}$. If $i>n/2$, there will be some values of $k$ for which $n-2k<i-k$. For example, if $n=11$ and $i=8$, then $k=4$ has $n-2k=3<4=i-k$.   But recall that
$$\binom{m}{r}=0$$
whenever $0\leq m<r$ (see, for example, \cite[Section 1.9]{Koshy2014}). Thus, terms for which  $0\leq n-2k<i-k$ do not contribute to the sum in Theorem \ref{thm:id}. 

If $n-2k<0$, then $\binom{n-2k}{i-k}$ is no longer 0. However in this case, we have $n<2k$, which implies, $n-k<k$. Thus, $\binom{n-k}{k} =0$ instead.

Hence, all terms for which $i>n/2$ do not contribute to the sum in Theorem \ref{thm:id}. 
\end{remark}
\begin{remark}\label{rem:oeis}
The expressions $\frac{n}{n-k}\binom{n-k}{k}$ that appear in Theorem \ref{thm:id} are referred to as the Triangle of coefficients of Lucas (or Cardan) polynomials, denoted $T(n,k)$, in the On-Line Encyclopedia of Integer Sequences \cite{oeis}.
\end{remark}
\begin{proof}[Proof of Theorem \ref{thm:id}]
The following proof starts with  an identity attributed to E.H. Lockwood. For any $n\geq1$,
$$x^n+y^n = \sum^{\lfloor n/2\rfloor}_{k=0} (-1)^{k}\frac{n}{n-k}\binom{n-k}{k}(xy)^k(x+y)^{n-2k}$$
 (see, for example, \cite[Section 9.8]{Koshy2014}).
 
We separate the $k=0$ term from the summation to get
\begin{equation}\label{eqn:lockwood}
x^n+y^n = (x+y)^n+ \sum^{\lfloor n/2\rfloor}_{k=1} (-1)^{k}\frac{n}{n-k}\binom{n-k}{k}(xy)^k(x+y)^{n-2k}.    
\end{equation}

The Binomial Theorem tells us that 
\begin{equation}\label{eqn:binomial}
    (x+y)^n =\sum_{i=0}^n\binom{n}{i} x^{n-i}y^i=x^n+y^n+\sum_{i=1}^{n-1}\binom{n}{i} x^{n-i}y^i
\end{equation}

Substituting Equation \ref{eqn:binomial} into Equation \ref{eqn:lockwood} yields
\begin{equation*}
    x^n+y^n = x^n+y^n+\sum_{i=1}^{n-1}\binom{n}{i} x^{n-i}y^i+ \sum^{\lfloor n/2\rfloor}_{k=1} (-1)^{k}\frac{n}{n-k}\binom{n-k}{k}(xy)^k(x+y)^{n-2k}.
\end{equation*}
Hence,
\begin{equation}\label{eqn:coeff}
   \sum_{i=1}^{n-1}\binom{n}{i} x^{n-i}y^i+ \sum^{\lfloor n/2\rfloor}_{k=1} (-1)^{k}\frac{n}{n-k}\binom{n-k}{k}(xy)^k(x+y)^{n-2k}=0.
\end{equation}

Thus, when combining the two sums, the coefficient of each $x^{n-i}y^i$ term must equal 0. We expand the second summand in order to identify all terms of the form $x^{n-i}y^i$. The Binomial Theorem tells us that, for each $k$, 
$$(x+y)^{n-2k} = \sum_{j=0}^{n-2k} \binom{n-2k}{j} x^{n-2k-j}y^j.$$
Hence, 
\begin{equation}\label{eqn:xy}
(xy)^k(x+y)^{n-2k} = \sum_{j=0}^{n-2k} \binom{n-2k}{j} x^{n-k-j}y^{j+k}.    
\end{equation}
The values of $j$ that yield $x^{n-i}y^i$ terms are $j=i-k$. Note that we must have $k\leq i$, since otherwise $j\leq0$. Thus, the coefficient of $x^{n-i}y^i$ in Equation \ref{eqn:xy} is
$$\sum_{k=1}^i \binom{n-2k}{i-k}.$$

Hence, the sum of the coefficients of the $x^{n-i}y^i$ terms in Equation \ref{eqn:coeff} is
$$\sum_{k=0}^i (-1)^k\frac{n}{n-k}\binom{n-k}{k}\binom{n-2k}{i-k}=0,$$
where the $k=0$ term is $\binom{n}{i}$, which comes from the first summation in Equation \ref{eqn:coeff}.

\end{proof}

\section{Application to Hyperelliptic Curves}
In this section we give an application of the identity in Theorem \ref{thm:id}. Work on this application in \cite[Section 5.1]{EmoryGoodsonPeyrot} is what led the author to discover the identity in Theorem \ref{thm:id}.

Let $C$ be the genus $g$ hyperelliptic curve $y^2=x^{2g+1}+x$. The map 
$$\phi(x,y) = \left(\dfrac{x^2+1}{x}, \dfrac{y}{x^{a}} \right),$$
where $a=\frac{g+1}{2}$, is a nonconstant morphism from $C$ to some curve, denoted $C'$. Note that the curve $C'$ will also be hyperelliptic. We initially define $C'$ to be of the form
$$y^2=c_dx^d+\ldots +c_{d-i}x^{d-i}+\ldots + c_0$$
and we will apply the transformation of variables given by $\phi$ to determine the coefficients $c_j$. Applying the transformation yields
\begin{align*}\left(\frac{y}{x^a}\right)^2&=c_d\left(\dfrac{x^2+1}{x}\right)^d+\ldots+c_{d-i}\left(\dfrac{x^2+1}{x}\right)^{d-i}+\ldots + c_0\\
\frac{y^2}{x^{g+1}}&=c_dx^{-d}({x^2+1})^d+\ldots+c_{d-i}x^{i-d}({x^2+1})^{d-i}+\ldots + c_0\\
y^2&=c_dx^{g+1-d}({x^2+1})^d+\ldots+c_{d-i}x^{g+1+i-d}({x^2+1})^{d-i}+\ldots + c_0x^{g+1}.
\end{align*}
In order for $\phi$ to be a morphism from $C$ to $C'$, this last equation should, in fact, be the equation for the curve $C$. Note that the degree of the expression in $x$ will be $g+1-d+2d=g+1+d$. Hence, we need $c_d=1$ and $g+1+d = 2g+1$, so that $d=g$. We use this to simplify the above equation to
\begin{align}\label{eqn:generalC'}
y^2&=x({x^2+1})^g+\ldots +c_{g-i}x^{1+i}({x^2+1})^{g-i} +\ldots + c_0x^{g+1}.
\end{align}
In order to determine the coefficients $c_i$, we need to expand the right-hand side of the equation and match coefficients with those of $C$. We now work through two examples to better understand what the coefficients of $C'$ will be.

\begin{example}\label{ex:g=5}
Let $g=5$, so that $C$ is the hyperelliptic curve $y^2=x^{11}+x$. From our above work, we know that the degree of $C'$ will be $5$. Consider the following terms from Equation \ref{eqn:generalC'}: $A_1=x({x^2+1})^5$, $A_2=x^3({x^2+1})^3$, and $A_3=x^5({x^2+1})^1$. We expand each of these to get
\begin{align*}
    A_1&=x(x^{10}+5x^8+10x^6+10x^4+5x^2+1)\\
     &=x^{11}+5x^9+10x^7+10x^5+5x^3+x,\\
    A_2&=x^3(x^6+3x^4+3x^2+1)\\
     &=x^9+3x^7+3x^5+x^3,\\
    A_3&=x^5(x^2+1)\\
     &= x^7+x^5.
\end{align*}

Note that $A_1-5A_2+5A_3 = x^{11}+x$. Hence, $\phi$ is a morphism from $C$ to $ y^2=x^5-5x^3+5x$.
\end{example}

\begin{example}\label{ex:g=6}
Now let $g=6$, so that $C$ is the hyperelliptic curve $y^2=x^{13}+x$. From our above work, we know that the degree of $C'$ will be $6$. Consider the following terms from Equation \ref{eqn:generalC'}: $B_1=x({x^2+1})^6$, $B_2=x^3({x^2+1})^4$, $B_3=x^5({x^2+1})^2$, and $B_4=x^7(x^2+1)^0$. We expand each of these to get
\begin{align*}
    B_1&=x(x^{12}+6x^{10}+15x^8+2-x^6+15x^4+6x^2+1)\\
     &=x^{13}+6x^{11}+15x^9+2-x^7+15x^5+6x^3+x,\\
    B_2&=x^3(x^8+4x^6+6x^4+4x^2+1)\\
     &=x^{11}+4x^9+6x^7+4x^5+x^3,\\
    B_3&=x^5(x^4+2x^2+1)\\
     &= x^9+2x^7+x^5\\
    B_4&=x^7.
\end{align*}
One can easily show that $B_1-6B_2+9B_3-2B_4=x^{13}+x$, which tells us that $\phi$ is a morphism from $C$ to $y^2=x^6-6x^4+9x^2-2$.
\end{example}

While working on \cite[Section 5.1]{EmoryGoodsonPeyrot}, the author determined (by hand) the curve $C'$ for $g=11$. The coefficients she found were 1, 11, 44, 77, 55, and 11, with alternating signs (see Table \ref{tab:curvetable} below). The author entered this sequence of numbers into the On-line Encyclopedia of Integer Sequences \cite{oeis} and found that these numbers are the Triangle of coefficients of Lucas (or Cardan) polynomials, $T(n,k)$. The coefficients that appear in Examples \ref{ex:g=5} and \ref{ex:g=6} are also of the form $T(n,k)$. As noted in Remark \ref{rem:oeis}, $$T(n,k)= \frac{n}{n-k}\binom{n-k}{k}.$$
This leads us to the following theorems.

\begin{theorem}\label{thm:curvemorphism}
Let $C$ be the hyperelliptic curve $y^2=x^{2g+1}+x$ and let $C'$ be the hyperelliptic curve
$$y^2=\sum^{\lfloor g/2\rfloor}_{k=0} (-1)^{k}\frac{g}{g-k}\binom{g-k}{k} x^{g-2k}.$$
Then the map 
$$\phi(x,y) = \left(\dfrac{x^2+1}{x}, \dfrac{y}{x^{a}} \right),$$
where $a=\frac{g+1}{2}$, is a nonconstant morphism from $C$ to $C'$.
\end{theorem}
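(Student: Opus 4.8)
The plan is to insert the coefficients $c_{g-2k}=(-1)^k T(g,k)$ from the definition of $C'$, where $T(g,k)=\frac{g}{g-k}\binom{g-k}{k}$, into Equation \ref{eqn:generalC'} and check that the right-hand side collapses to the defining polynomial $x^{2g+1}+x$ of $C$. The discussion preceding the theorem already establishes that $\phi$ sends $C$ into a curve of the form \ref{eqn:generalC'} with $c_g=1$, so the only remaining task is to confirm that this specific list of coefficients reproduces $C$ exactly. Nonconstancy of $\phi$ is immediate, as the first coordinate $(x^2+1)/x$ is a nonconstant rational function; I would also observe that the second coordinate enters only through its square, $(y/x^a)^2=y^2/x^{2a}=y^2/x^{g+1}$, so that only $2a=g+1\in\mathbb{Z}$ is used and the non-integrality of $a$ for even $g$ plays no role.

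Since $C'$ has only even-degree terms $x^{g-2k}$, under the substitution $x^{g-i}\mapsto x^{1+i}(x^2+1)^{g-i}$ the odd-degree coefficients vanish and the right-hand side of Equation \ref{eqn:generalC'} becomes $\sum_{k=0}^{\lfloor g/2\rfloor}(-1)^k T(g,k)\,x^{1+2k}(x^2+1)^{g-2k}$. Pulling out the common factor of $x$, the entire theorem reduces to the single polynomial identity
$$\sum_{k=0}^{\lfloor g/2\rfloor}(-1)^k \frac{g}{g-k}\binom{g-k}{k}\,x^{2k}(x^2+1)^{g-2k}=x^{2g}+1.$$

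The cleanest finish is to recognize the left-hand side as an instance of the Lockwood identity already invoked in the proof of Theorem \ref{thm:id}. Substituting $x\mapsto x^2$, $y\mapsto 1$, and $n\mapsto g$ into
$$x^n+y^n=\sum_{k=0}^{\lfloor n/2\rfloor}(-1)^k\frac{n}{n-k}\binom{n-k}{k}(xy)^k(x+y)^{n-2k}$$
produces $(x^2)^g+1=\sum_k(-1)^k T(g,k)\,x^{2k}(x^2+1)^{g-2k}$, which is precisely the required identity, and multiplying back by $x$ returns $x^{2g+1}+x$. As an alternative that ties the application directly to Theorem \ref{thm:id}, I would expand $(x^2+1)^{g-2k}$ by the Binomial Theorem, group by powers of $x^2$, and note that the coefficient of $x^{2m}$ is $\sum_k(-1)^k T(g,k)\binom{g-2k}{m-k}$; this vanishes for $0<m<g$ by Theorem \ref{thm:id} and equals $1$ for $m=0$ and $m=g$, again giving $x^{2g}+1$.

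Because Lockwood's identity carries the analytic weight, I do not anticipate a genuine obstacle; the real content is the recognition that the Cardan/Lucas coefficients $T(g,k)$ are exactly the Lockwood coefficients evaluated at $(x^2,1)$. The step most prone to error is the bookkeeping: verifying that the exponent map from $C'$ to Equation \ref{eqn:generalC'} is $x^{g-i}\mapsto x^{1+i}(x^2+1)^{g-i}$, that the odd-degree terms really do carry zero coefficient, and that the single factor of $x$ extracted matches the lone linear term of $x^{2g+1}+x$. I expect these index checks, rather than any conceptual difficulty, to be where care is most needed.
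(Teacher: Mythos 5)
Your proposal is correct, and it supplies something the paper itself does not: the paper omits the proof of Theorem \ref{thm:curvemorphism} entirely, saying only that it is ``nearly identical'' to the proof of Lemma 5.1 of \cite{EmoryGoodsonPeyrot}. Your argument is a complete, self-contained verification built on exactly the machinery the paper develops: you substitute $c_{g-2k}=(-1)^k\frac{g}{g-k}\binom{g-k}{k}$ into Equation \ref{eqn:generalC'}, reduce everything to the single identity
$$\sum_{k=0}^{\lfloor g/2\rfloor}(-1)^k\tfrac{g}{g-k}\tbinom{g-k}{k}\,x^{2k}(x^2+1)^{g-2k}=x^{2g}+1,$$
and obtain it in one stroke from Lockwood's identity under $(x,y,n)\mapsto(x^2,1,g)$. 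Note that the paper's own narrative runs in the other direction: its expanded $g=5$ discussion expands Equation \ref{eqn:generalC'} and uses Theorem \ref{thm:id} to kill the coefficient of each intermediate power of $x$, which is precisely your \emph{alternative} finish. So your primary route buys brevity and bypasses Theorem \ref{thm:id} entirely by going straight to the source identity, while your alternative route is the one that realizes the paper's stated purpose of presenting the morphism as an application of Theorem \ref{thm:id}; either one validly completes the omitted proof, and your index bookkeeping (only $i=2k$ contributes, the lone factor of $x$ produces $x^{2g+1}+x$) checks out.

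One caveat: your parenthetical that the non-integrality of $a=\frac{g+1}{2}$ for even $g$ ``plays no role'' is true for the polynomial verification, since only $2a=g+1$ enters, but for $\phi$ to literally be a morphism of curves the coordinate $y/x^{a}$ must be a rational function on $C$, and when $g$ is even $x$ is not a square in the function field of $C$, so $y/x^{a}$ is not such a function. This defect is inherited from the paper's statement itself (its $g=6$ example has the same feature), so it is not a gap in your proof relative to the paper, but it merits a remark rather than a dismissal.
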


We can generalize Theorem \ref{thm:curvemorphism}. Let $c\in\mathbb Q^*$ be constant and $\zeta$ be a primitive $g$-th root of unity. In the following theorem we work over the field $\mathbb F=\mathbb Q(\zeta, c^{1/g})$.

\begin{theorem}\label{thm:curvemorphism2}
Let $C$ be the hyperelliptic curve $y^2=x^{2g+1}+cx$ and let $C_i$ be the hyperelliptic curve
$$y^2=\sum^{\lfloor g/2\rfloor}_{k=0} (-1)^{k}\frac{g}{g-k}\binom{g-k}{k}\zeta^{ik}c^{k/g} x^{g-2k}$$
for $i=0,1$. Then the map 
$$\phi_i(x,y) = \left(\dfrac{x^2+\zeta^{i}c^{1/g}}{x}, \dfrac{y}{x^{a}} \right),$$
where $a=\frac{g+1}{2}$, is a nonconstant morphism from $C$ to $C_i$.
\end{theorem}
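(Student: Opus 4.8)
The plan is to verify directly that the rational map $\phi_i$ sends points of $C$ to points of $C_i$ by substituting the coordinate functions of $\phi_i$ into the defining equation of $C_i$ and invoking Lockwood's identity. Since $C$ and $C_i$ are smooth projective curves, any nonconstant rational map between them is automatically a morphism, so it suffices to check the defining identity at the level of function fields (equivalently, on the affine patch where $x\neq 0$). Nonconstancy is immediate, since the first coordinate $\frac{x^2+\zeta^i c^{1/g}}{x}$ is a nonconstant function of $x$. Everything is defined over $\mathbb F=\mathbb Q(\zeta,c^{1/g})$, which is precisely why that field is chosen.

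First I would abbreviate $b=\zeta^{i}c^{1/g}$, so that the first coordinate of $\phi_i$ is $X=\frac{x^2+b}{x}=x+\frac{b}{x}$ and the defining polynomial of $C_i$ reads $\sum_{k=0}^{\lfloor g/2\rfloor}(-1)^k\frac{g}{g-k}\binom{g-k}{k}b^k X^{g-2k}$, using $\zeta^{ik}c^{k/g}=b^k$. Writing $Y=y/x^{a}$ for the second coordinate, using $a=\frac{g+1}{2}$ so that $2a=g+1$, and using the equation $y^2=x^{2g+1}+cx$ of $C$, I would compute
$$Y^2=\frac{y^2}{x^{g+1}}=\frac{x^{2g+1}+cx}{x^{g+1}}=x^g+\frac{c}{x^g}.$$

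The key step is to recognize that Lockwood's identity, namely $u^n+v^n=\sum_{k=0}^{\lfloor n/2\rfloor}(-1)^k\frac{n}{n-k}\binom{n-k}{k}(uv)^k(u+v)^{n-2k}$ as used in the proof of Theorem \ref{thm:id}, applies with $n=g$, $u=x$, and $v=b/x$. With this choice one has $u+v=X$ and $uv=b$, so the identity becomes
$$x^g+\frac{b^g}{x^g}=\sum_{k=0}^{\lfloor g/2\rfloor}(-1)^k\frac{g}{g-k}\binom{g-k}{k}b^k X^{g-2k},$$
whose right-hand side is exactly the defining polynomial of $C_i$ evaluated at $X$. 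Finally, since $\zeta$ is a primitive $g$-th root of unity, $b^g=\zeta^{ig}c=c$, so the left-hand side collapses to $x^g+c/x^g=Y^2$. This shows that $(X,Y)$ satisfies the equation of $C_i$, which proves that $\phi_i$ maps $C$ into $C_i$.

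I expect no serious obstacle here: the entire content is the substitution $u=x$, $v=b/x$ in Lockwood's identity, together with the observation that $b^g=c$ makes the root-of-unity twist disappear. The twisting by $\zeta^{i}c^{1/g}$ is engineered exactly so that the two symmetric functions $uv$ and $u^g+v^g$ take the values $b$ and $x^g+c/x^g$ needed to reproduce the equation of $C$. Theorem \ref{thm:curvemorphism} is then recovered as the special case $c=1$, $i=0$.
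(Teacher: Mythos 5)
Your proof is correct, but it takes a genuinely more direct route than the paper. The paper omits its own proof, deferring to Lemma 5.1 of \cite{EmoryGoodsonPeyrot}; that argument, like the exposition in Section 3, posits a target curve $y^2=c_gx^g+\cdots+c_0$ with undetermined coefficients, substitutes the map, clears denominators, expands each term $c_{g-i}x^{1+i}(x^2+b)^{g-i}$ by the Binomial Theorem, and determines the coefficients by matching powers of $x$ --- the cancellation of the unwanted middle powers being exactly the content of Theorem \ref{thm:id}. You instead invoke Lockwood's identity once, with $u=x$ and $v=b/x$ where $b=\zeta^i c^{1/g}$, so that $uv=b$, $u+v=X$, and $u^g+v^g=x^g+b^g/x^g=x^g+c/x^g=Y^2$; this collapses the entire verification to a single substitution, and it explains conceptually both why the Lucas/Cardan coefficients $T(g,k)$ appear (they are Lockwood's coefficients) and why the twist must be a $g$-th root of $c$ (so that $b^g=c$). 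What the paper's coefficient-matching route buys, and yours does not, is the explicit link to the Pascal's-triangle identity: in that argument Theorem \ref{thm:id} is precisely the statement being used, which is how the author discovered it. One caveat, inherited from the paper's statement rather than introduced by you: when $g$ is even, $a=\frac{g+1}{2}$ is a half-integer, so $Y=y/x^a$ is not literally an element of the function field of $C$, and your appeal to ``a nonconstant rational map between smooth projective curves is a morphism'' must be read with the same convention the paper itself uses; your computation of $Y^2$, which only ever uses $x^{2a}=x^{g+1}$, is unaffected by this.
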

Since 
$$\frac{g}{g-k}\binom{g-k}{k} = \left[\binom{g-k}{k}+\binom{g-k-1}{k-1}\right]$$
(see, for example, \cite[Section 9.9]{Koshy2014}), Theorem \ref{thm:curvemorphism2} also generalizes Lemma 5.1 in \cite{EmoryGoodsonPeyrot} because we are no longer restricting $g$ to be odd. The proofs of Theorems \ref{thm:curvemorphism} and \ref{thm:curvemorphism2} are nearly identical to the proof of Lemma 5.1 in \cite{EmoryGoodsonPeyrot}, and so we omit them.

We now expand Example \ref{ex:g=5} to show how our work in this section relates to our work in Theorem \ref{thm:id}. Let $n=g=5$, and $k$ range from 0 to $\lfloor n/2\rfloor=2$. We evaluate
$$(-1)^k\frac{n}{n-k}\binom{n-k}{k}$$
for each of these values of $k$ to get
\begin{align*}
    k=0 &:\; (-1)^0\frac{5}{5-0}\binom{5-0}{0}=1\\
    k=1 &:\; (-1)^1\frac{5}{5-1}\binom{5-1}{1}=-5\\
    k=2 &:\; (-1)^2\frac{5}{5-2}\binom{5-2}{2}=5,
\end{align*}
which are the coefficients in the equation for $C'$, i.e. those of $A_1,A_2,$ and $A_3$, respectively. These coefficients help us cancel certain powers of $x$ in the expansion of Equation \ref{eqn:generalC'}. For example, in the sum $A_1-5A_2+5A_3$, the coefficient of $x^5$ is
\begin{align*}
    0&=10 - 5\cdot 3 +5\cdot1\\
     &=\binom{5}{2}-5\binom{3}{1}+5\binom{1}{0}\\
     &=\frac{5}{5-0}\binom{5-0}{0}\binom{5}{2}-\frac{5}{5-1}\binom{5-1}{1}\binom{3}{1}+\frac{5}{5-2}\binom{5-2}{2}\binom{1}{0},
\end{align*}
which matches the statement of Theorem \ref{thm:id} for $n=5$ and $i=2$. 

\subsection{Higher Genus Examples}
Table \ref{tab:curvetable} below gives $C_i$ for values of $g$ up to 11 and for $c=1$. Note that this table expands on the table that appears in \cite[Section 5.1]{EmoryGoodsonPeyrot}.

\begin{table}[h]
\begin{tabular}{c|l}\setlength{\tabcolsep}{.5pt}
    $g$ \hspace{.1in}&\hspace{.1in} curve $C_i$\\
    \hline 
    5 &\hspace{.1in} $y^2=x^5-5\zeta^{i}x^3+5\zeta^{2i}x$\\
    6 &\hspace{.1in} $y^2=x^6-6\zeta^{i}x^4+9\zeta^{2i}x^2-2\zeta^{3i}$\\
    7 & \hspace{.1in} $y^2 = x^7-7\zeta^{i} x^5+14\zeta^{2i}x^3-7\zeta^{3i}x$\\
    8 &\hspace{.1in} $y^2 = x^8 -8\zeta^{i} x^6+20\zeta^{2i}x^4 - 16\zeta^{3i}x^2 +2 \zeta^{4i} $\\
    9 &\hspace{.1in} $y^2 = x^9 -9\zeta^{i} x^7+27\zeta^{2i}x^5 - 30\zeta^{3i}x^3 +9 \zeta^{4i} x$\\
    10 & \hspace{.1in}$y^2=x^{10}-10\zeta^{i}x^8+35\zeta^{2i}x^6-50\zeta^{3i}x^4+25\zeta^{4i}x^2-2\zeta^{5i}$\\
    11 & \hspace{.1in}$y^2=x^{11}-11\zeta^{i}x^9+44\zeta^{2i}x^7-77\zeta^{3i}x^5+55\zeta^{4i}x^3-11\zeta^{5i}x$
\end{tabular}
    \caption{ }
    \label{tab:curvetable}
\end{table}

Note that for all $g$, the coefficient of second term of the expression in $x$ will always be $-g$  (times a power of $\zeta$). The reason this is the case is that this coefficient corresponds to $k=1$, and
\begin{align*}
    (-1)^k\frac{g}{g-k}\binom{g-k}{k}&=-\frac{g}{g-1}\binom{g-1}{1}\\
                               &=-\frac{g}{g-1}\cdot(g-1)\\
                               &=-g.
\end{align*} 

Note that when $g$ is even, the final term corresponds to $k=g/2$, which yields $x^0$. We compute the coefficient to be 
\begin{align*}
    (-1)^k\frac{g}{g-k}\binom{g-k}{k}&=(-1)^{g/2}\frac{g}{g-g/2}\binom{g-g/2}{g/2}\\
    &=(-1)^{g/2}\frac{g}{g/2}\binom{g/2}{g/2}\\
                               &=(-1)^{g/2}2.
\end{align*} 
Hence, when $g$ is even, the final term of the expression in $x$ will always be $(-1)^{g/2}2$ (times a power of $\zeta$).

On the other hand, when $g$ is odd, the final term corresponds to $k=(g-1)/2$, which yields $x^1$. We compute the coefficient to be 
\begin{align*}
   (-1)^k \frac{g}{g-k}\binom{g-k}{k}&=(-1)^{(g-1)/2}\frac{g}{g-(g-1)/2}\binom{g-(g-1)/2}{(g-1)/2}\\
                               &=(-1)^{(g-1)/2}\frac{g}{(g+1)/2}\binom{(g+1)/2}{(g-1)/2}\\
                               &=(-1)^{(g-1)/2}\frac{g}{(g+1)/2}\binom{(g-1)/2+ 1}{(g-1)/2}\\
                               &=(-1)^{(g-1)/2}\frac{g}{(g+1)/2}\cdot ((g-1)/2+ 1)\\
                               &=(-1)^{(g-1)/2}g.
\end{align*} 

Hence, when $g$ is odd, the final term of the expression in $x$ will always be $(-1)^{(g-1)/2}gx$ (times a power of $\zeta$).

\section*{Acknowledgements}
The author thanks Darij Grinberg for helpful comments on an earlier draft of this paper.

\end{document}